\newtheorem{theorem}{Theorem}[section]
\newtheorem{lemma}[theorem]{Lemma}
\newtheorem{proposition}[theorem]{Proposition}
\newtheorem{corollary}[theorem]{Corollary}
\newtheorem*{thmA}{Theorem A}
\newtheorem*{thmB}{Theorem B}
\theoremstyle{remark}
\newcommand{\N}{\mathbb{N}}
\newcommand{\Z}{\mathbb{Z}}
\newcommand{\Zen}{\mathrm{Z}}
\newcommand{\FC}{\mathrm{FC}}
\newcommand{\BFC}{\mathrm{BFC}}
\newcommand{\tim}{\mathrm{\; times}}
\keywords{Group word, verbal subgroup, concise words, semiconcise words}
\subjclass[2020]{20F10,20F12,20F14,20F24}
\begin{document}

\title{A generalization of concise words}

\author[C. Delizia, M. Gaeta, C. Monetta]{Costantino Delizia, Michele Gaeta, Carmine Monetta}
\address{Dipartimento di Matematica, Universit\`a di Salerno, Fisciano 84084 (SA), Italy}
\email{cdelizia@unisa.it, migaeta@unisa.it, cmonetta@unisa.it}

\maketitle

\begin{abstract}
The study of verbal subgroups within a group is well-known for being an effective tool to obtain structural information about a group. Therefore, conditions that allow the classification of words in a free group are of paramount importance. One of the most studied problems is to establish which words are concise, where a word
$w$ is said to be concise if 
the verbal subgroup $w(G)$ is finite in each group $G$
in which $w$ takes only a finite number of values.
  The purpose of this article is to present some results, in which a hierarchy among words is introduced, generalizing the concept of concise word.
\end{abstract}

\section{Introduction}

Let $w=w(x_1,\dots, x_n)$ be a group-word in the variables $x_1, \dots, x_n$. For any group $G$ and arbitrary $g_1, \dots, g_n$, the elements $w(g_1, \dots, g_n)$ are called the $w$-values in $G$. We write $G_w$ to denote the set of all $w$-values in $G$ and $w(G)$ to denote the verbal subgroup of $G$ corresponding to $w$, which is the subgroup generated by all $w$-values.
For a subset $S$ of a group $G$ we denote $S^*=S\cup S^{-1}$. Note that if $S$ is a normal set then $S^*$ is normal too.
Clearly, any conjugate of a $w$-value is again a $w$-value, and so $G_w$ and $G_w^*$ are normal sets.

A word $w$ is called concise if the verbal subgroup $w(G)$ is finite in each group $G$
such that $G_w$ is finite. One of the most studied problems is to establish which words are concise. In the sixties, P. Hall conjectured that every word is concise,
but his conjecture was refuted in 1989 by S. Ivanov \cite{I}. However,
many words of common use are known to be concise (see, e.g., \cite{DSTT}, \cite{FMT}).

A word is called semiconcise if the subgroup $[w(G), G]$ is finite in each group $G$
such that $G_w$ is finite. Of course concise words are semiconcise. In \cite{DST} they proved that if $w$ is a semiconcise word and $z$ is any variable not appearing in $w$, then the word $[w,z]$ is also semiconcise. It is not known if there exists a semiconcise word which is not concise, however in \cite{DST} they proved that there exist a word which is not semiconcise.

We consider a generalization of semiconcise words.
Let $w$ be a group-word. The word $w$ is $\frac{1}{n}$-concise if the finiteness of $G_w$ for any group $G$ implies that the subgroup
$$
[w(G), \underbrace{G, \dots, G}_{n-1 \tim}]
$$
is finite.
The word $w$ is $0$-concise if the finiteness of $G_w$ for any group $G$ implies that there exist $n \in \N$ (depending on the group $G$) such that the subgroup
$$
[w(G), \underbrace{G, \dots, G}_{n-1 \tim}]
$$
is finite.
Obviously every $\frac{1}{n}$-concise word is a $\frac{1}{m}$-concise word for every $m \geq n$, and every $\frac{1}{n}$-concise word is a $0$-concise word, so this introduces what we can call a hierarchy on words. It is unknown if there are $\frac{1}{m}$-concise word that are not $\frac{1}{n}$-concise for some $m > n$. However, in a similar way as in \cite{DST}, we can see that there exist a word which is not $\frac{1}{n}$-concise for every $n \in \N$. 

Further information on the subgroup $$
[w(G), \underbrace{G, \dots, G}_{n-1 \tim}]
$$ when $w$ is $\frac{1}{n}$-concise can be obtained using a verbal generalization of $\FC$-groups. For subsets $X$ and $Y$ of a group $G$ we write $X^Y$ to denote the set of conjugates $\{x^y | x\in X, y \in Y\}$.
    Let $G$ be a group and let $H$ be a subgroup of $G$. The subgroup $H$ is said to be $\FC$-embedded in $G$ if $x^H$ is finite for all $x \in G$.  The subgroup $H$ is said to be $\BFC$-embedded in $G$ if $x^H$ is finite for all $x \in G$ and the number of elements in $x^H$ is bounded by a constant that does not depend on the choice of $x$.
 Let $w$ be a group-word and let $G$ be a group. The group $G$ is an $\FC (w)$-group if the set of conjugates $x^{G_w}$ is finite for all $x \in G$. The group $G$ is a $\BFC (w)$-group if $x^{G_w}$ is finite for all $x \in G$ and the number of elements in $x^{G_w}$ is bounded by a constant that does not depend on the choice of $x$.

In this paper we generalize the results in \cite{DST}. More precisely, we prove the following theorems.

\begin{thmA}
  Let $w$ be a $\left(\frac{1}{m+1}\right)$-concise word, with $m \in \N$ and let $G$ be an $\FC (w)$-group. Then 
  $$
    [w(G), \underbrace{G, \dots, G}_{m \tim}]
  $$
  is $\FC$-embedded in $G$.
\end{thmA}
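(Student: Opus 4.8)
Fix an element $x\in G$ and set $H:=[w(G),\underbrace{G,\dots,G}_{m\tim}]$, a normal subgroup of $G$ contained in $w(G)$. Since $x^{h}=x\,[x,h]$ with $[x,h]\in H$ for every $h\in H$, the conjugacy class $x^{H}$ is finite exactly when the set $\{[x,h]:h\in H\}$ is finite, and establishing this for all $x$ is precisely the statement that $H$ is $\FC$-embedded in $G$; note in particular that these commutators lie in $w(G)$.

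The hypothesis that $G$ is an $\FC(w)$-group enters through a coset reformulation. For any $y\in G$ one has $y^{a}=y^{b}$ iff $ab^{-1}\in C_{G}(y)$, so $|y^{G_{w}}|<\infty$ says exactly that $G_{w}$ meets only finitely many right cosets of $C_{G}(y)$. As $Ca\cap Db$ is empty or a single coset of $C\cap D$, intersecting this condition over finitely many elements $y_{1},\dots,y_{s}$ shows that $G_{w}$ meets only finitely many cosets of $C_{G}(\langle y_{1},\dots,y_{s}\rangle)$. Assuming for the moment that $w(G)$ is finitely generated, take $y_{1},\dots,y_{s}$ to be $x$ together with finitely many $w$-values generating $w(G)$, and set $Z:=C_{G}(\langle w(G),x\rangle)=C_{G}(y_{1})\cap\dots\cap C_{G}(y_{s})$; then the image of $G_{w}$ in $\overline{G}:=G/Z$ is finite.

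Now $\left(\frac{1}{m+1}\right)$-conciseness applies to $\overline{G}$ and yields that $[w(\overline{G}),\underbrace{\overline{G},\dots,\overline{G}}_{m\tim}]=HZ/Z$ is finite, that is, $H/(H\cap Z)$ is finite. Since $H\le w(G)$ and $Z\le C_{G}(w(G))$, every element of $H\cap Z$ centralizes $w(G)$; hence $H\cap Z\le Z(H)$, so $H/Z(H)$ is finite and $H'$ is finite by Schur's theorem. Moreover $H\cap Z\le C_{G}(x)$, and each $z\in H\cap Z$ centralizes the commutators $[x,h]\in w(G)$, so the identity $[x,h_{1}h_{2}]=[x,h_{2}][x,h_{1}]^{h_{2}}$ shows that $h\mapsto[x,h]$ is constant on the finitely many cosets of $H\cap Z$ in $H$. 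Therefore $\{[x,h]:h\in H\}=x^{-1}x^{H}$ is finite, as wanted.

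What remains — and what I expect to be the main obstacle — is to dispose of the assumption that $w(G)$ is finitely generated. A plain descent to finitely generated subgroups $G_{0}\ni x$ only shows that each $x^{[w(G_{0}),\underbrace{G_{0},\dots,G_{0}}_{m\tim}]}$ is finite, which does not suffice: those cardinalities increase precisely to $|x^{H}|$, and $\left(\frac{1}{m+1}\right)$-conciseness provides no uniform bound on them. The right approach should instead be to construct directly a single normal subgroup $M$ of $G$ with $G_{w}M/M$ finite and with $x^{H}$ not collapsed modulo $M$ — built, via a Neumann-type argument, from the centralizers of the finitely many conjugates in $x^{G_{w}}$ and of suitably chosen $w$-values — and then to verify that $M$ does not absorb the displacements $\{[x,h]\}$. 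This construction, together with the bookkeeping of the $m$ copies of $G$ sitting inside $H$, is where the argument should go beyond the semiconcise ($m=1$) case of \cite{DST}; the remaining steps sketched above are routine adaptations.
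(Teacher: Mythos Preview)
Your argument has two genuine gaps. First, the subgroup $Z=C_{G}(\langle w(G),x\rangle)$ is not normal in $G$: centralizing the normal subgroup $w(G)$ is a conjugation-invariant condition, but centralizing the arbitrary fixed element $x$ is not, so $\overline{G}=G/Z$ is not a group and $\left(\frac{1}{m+1}\right)$-conciseness cannot be applied to it. Replacing $Z$ by its normal core destroys the finiteness of the image of $G_{w}$, and replacing $\langle w(G),x\rangle$ by its normal closure destroys it for the same reason (you would need $G_{w}$ to meet finitely many cosets of an \emph{infinite} intersection of centralizers). Second --- and as you yourself flag --- the assumption that $w(G)$ is finitely generated is unjustified, and the fix you sketch (a normal $M$ with $G_{w}M/M$ finite yet $x^{H}$ uncollapsed modulo $M$) is not clearly constructible: nothing in the $\FC(w)$ hypothesis produces a \emph{normal} subgroup of $G$ making all of $G_{w}$ have finite image, and any Neumann-type intersection of finitely many centralizers $C_{G}(y_{i})$ lands you back at a non-normal subgroup.

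The paper's proof avoids both obstacles by a different route. Writing $v=[w,x_{n+1},\dots,x_{n+m}]$ so that $v(G)=H$, one first shows (Lemma~\ref{lemma 2}) that $G$ is an $\FC(v)$-group; for fixed $x$ one picks finitely many $b_{1},\dots,b_{r}\in G_{v}^{*}$ with $x^{G_{v}^{*}}=\{x^{b_{1}},\dots,x^{b_{r}}\}$ and uses a combinatorial rewriting (a shortlex-type order on formal products of the $b_{i}$) to express every $x^{y}$ with $y\in v(G)$ as $x^{b_{r}^{e_{r}}\cdots b_{1}^{e_{1}}}$. The conciseness hypothesis enters only through Lemma~\ref{lemma 3}, and there it is applied not to any quotient of $G$ but to a \emph{finitely generated} subgroup $J\le G$ (generated by $x$ together with witnesses for the $b_{i}$) modulo its centre $\Zen(J)$, which is automatically normal in $J$; the input \cite[Lemma~2.7(i)]{DST17} guarantees $(J/\Zen(J))_{w}$ is finite, and conciseness then yields a single exponent $e$ with $b_{i}^{e}\in\Zen(\langle x,B\rangle)$, bounding the $e_{i}$ and hence $|x^{v(G)}|$. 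Passing to a finitely generated subgroup and its centre --- rather than to a quotient of $G$ by a centralizer --- is precisely what dissolves both your normality and finite-generation difficulties.
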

\begin{thmB}
  Let $w$ be a $\left(\frac{1}{m+1}\right)$-concise word, with $m \in \N$ and let $G$ be an $\BFC (w)$-group. Then 
  $$
    [w(G), \underbrace{G, \dots, G}_{m \tim}]
  $$
  is $\BFC$-embedded in $G$.
\end{thmB}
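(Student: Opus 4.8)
The plan is to obtain Theorem~B from Theorem~A by an ultraproduct argument that upgrades ``$\FC$-embedded'' to ``$\BFC$-embedded''. Since a $\BFC(w)$-group is in particular an $\FC(w)$-group, Theorem~A already gives that $H:=[w(G),\underbrace{G,\dots,G}_{m \tim}]$ is $\FC$-embedded in $G$, i.e.\ $x^{H}$ is finite for every $x\in G$; the content of Theorem~B is that $|x^{H}|$ can be bounded independently of $x$. So assume, for a contradiction, that there are elements $x_i\in G$ ($i\in\N$) with $|x_i^{H}|\to\infty$.

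Fix a non-principal ultrafilter $\mathcal U$ on $\N$, let $\widehat G=G^{\N}/\mathcal U$ be the corresponding ultrapower and put $\widehat x=(x_i)_i\in\widehat G$. The first key point is that $\widehat G$ is again a $\BFC(w)$-group with the same constant $k$. Indeed, since $w$-values are computed coordinatewise, every element of $\widehat G_w$ is represented by a sequence $(g_i)_i$ with all $g_i\in G_w$; hence, for $\widehat y=(y_i)_i\in\widehat G$, the set $\widehat y^{\,\widehat G_w}$ is contained in $\prod_i y_i^{G_w}/\mathcal U$, which has at most $k$ elements because each $y_i^{G_w}$ does (this is expressed by a first-order sentence, so it passes to ultraproducts). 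In particular $\widehat G$ is an $\FC(w)$-group, so Theorem~A, applied to $\widehat G$, shows that $[w(\widehat G),\underbrace{\widehat G,\dots,\widehat G}_{m \tim}]$ is $\FC$-embedded in $\widehat G$; in particular $\widehat x$ has a finite orbit under this subgroup.

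The remaining step, which I expect to be the main obstacle, is to contradict this by showing that $\widehat x$ actually has an infinite orbit under $[w(\widehat G),\underbrace{\widehat G,\dots,\widehat G}_{m \tim}]$. Since $G_w$ is a normal set, $G_w^{*}$ is a $G$-invariant generating set of $w(G)$, and consequently $H$ is generated by the iterated commutators $[g,a_1,\dots,a_m]$ with $g\in G_w^{*}$ and $a_1,\dots,a_m\in G$; the coordinatewise copy in $\widehat G$ of each such commutator lies in $[w(\widehat G),\underbrace{\widehat G,\dots,\widehat G}_{m \tim}]$, because a constant sequence of elements of $G_w^{*}$ belongs to $w(\widehat G)$. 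It therefore suffices to know that, for every $i$, one can exhibit $>i$ pairwise distinct conjugates of $x_i$ under $H$ using conjugating elements that are products of a bounded number of these commutator generators, the bound being independent of $i$: such elements assemble, across the coordinates, into genuine elements of $[w(\widehat G),\underbrace{\widehat G,\dots,\widehat G}_{m \tim}]$ sending $\widehat x$ to pairwise distinct points, which is absurd. This ``bounded commutator-width'' statement is exactly where the hypothesis that $w$ is $\left(\frac1{m+1}\right)$-concise and the full $\BFC(w)$ assumption on $G$ (not merely $\FC(w)$) have to be used, presumably through a uniform quantitative form of the underlying (semi)conciseness; once it is in place, the uniform bound on $|x^{H}|$ obtained is precisely the assertion that $H$ is $\BFC$-embedded in $G$. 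An equivalent route is to re-run the proof of Theorem~A, keeping track of the dependence of each finiteness bound on $k$, the sole non-routine point being the same quantitative reformulation of conciseness.
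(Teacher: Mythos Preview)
Your primary strategy---taking an ultrapower of $G$ and applying Theorem~A there---runs into exactly the obstacle you flag, and that obstacle is not a technicality but essentially the whole content of the theorem. Membership in the verbal subgroup $[w(\widehat G),\widehat G,\dots,\widehat G]$ is not first-order, so a sequence $(h_i)_i$ with every $h_i\in H$ need not represent an element of $[w(\widehat G),\widehat G,\dots,\widehat G]$; to push witnesses through the ultrapower you need precisely the uniform commutator-length statement you isolate. But once one knows that every conjugate $x^H$ is already realised by a product of boundedly many $v$-values (with $v=[w,x_{n+1},\dots,x_{n+m}]$), the bound $|x^H|\le (\text{bounded})^{\text{bounded}}$ drops out immediately and no ultrapower of $G$ is needed. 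So this route either begs the question or collapses into the direct argument.

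The paper follows your ``equivalent route'': it re-runs the proof of Theorem~A keeping every auxiliary quantity bounded in terms of $m$, $n$, and the $\BFC(w)$ constant $r$. The size of $x^{G_v^*}$ is $\{m,n,r\}$-bounded by a quantitative version of Lemma~\ref{lemma 2} (Lemma~\ref{lemma 4}), and the only genuinely non-automatic step is the bound on the exponent $e$ in Lemma~\ref{lemma 3}. The paper obtains this (Lemma~\ref{lemma 5}) from two ingredients: a bounded version of \cite[Lemma~2.7]{DST17} giving $|(J/\Zen(J))_w|$ bounded, together with Proposition~\ref{proposizione 1}, which says that for a $\left(\frac{1}{m+1}\right)$-concise word the order of $[w(G),G,\dots,G]$ is $\{m,r\}$-bounded whenever $|G_w|\le r$. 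Proposition~\ref{proposizione 1} \emph{is} proved by an ultraproduct argument, so your instinct is correct---but the ultraproduct is taken over a family of groups in which $G_w$ is genuinely finite (so conciseness applies and only uniformity is in question), not over the $\BFC(w)$-group $G$ itself, where $G_w$ may well be infinite and the conciseness hypothesis gives nothing directly.
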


We also show that, for a certain word $w$, there is an
example of a $\BFC(w)$-group $G$ such that $$
    [w(G), \underbrace{G, \dots, G}_{n-1 \tim}]
  $$
  is not $\FC$-embedded in $G$.
Additionally, as a consequence, we have an example of a word which is not $0$-concise. 

\section{Unbounded case}

\begin{lemma}\label{lemma 1}
    Let $w=w(x_1, \dots, x_n)$ be a group-word and set 
    $$
    v=[w(x_1,\dots,x_n), x_{n+1}, \dots, x_{n+m}].
    $$
    Let $y \in G_v$. Then, there exist $w_1, \dots, w_k \in G_w^*$ such that $y=w_1\cdots w_k$, with $k \leq 2^m$.
    \begin{proof}
We argue by induction on $m$. The case $m=1$ was proven in \cite[Lemma 2.3]{DST}.


Assume that the assertion is true in case $m \in \N$. Consider 
\begin{eqnarray*}
    y&=&[w(g_1,\dots,g_n), g_{n+1}, \dots, g_{n+m}, g_{n+m+1}] \\
    &=&\left[[w(g_1,\dots,g_n), g_{n+1}, \dots, g_{n+m}], g_{n+m+1}\right].
\end{eqnarray*}
By induction hypothesis there exist $w_1, \dots, w_k \in G_w^*$ such that $y=[w_1 \cdots w_k, g_{n+m+1}]$, with $k\leq 2^m$. So we have 
\begin{eqnarray*}
    y&=&[w_1 \cdots w_k, g_{n+m+1}] \\
    &=& (w_1 \cdots w_k)^{-1} (w_1 \cdots w_k)^{g_{n+m+1}} \\
    &=& w_k^{-1} \cdots w_1^{-1} w_1^{g_{n+m+1}}\cdots w_k^{g_{n+m+1}}.
\end{eqnarray*}
Note that $w_i^{j} \in G_w^*$, with $i\in\{1, \dots, k\}, j\in\{-1, g_{n+m+1}\}$. So there exist $w'_1, \dots, w'_{k'} \in G_w^*$ such that $y=w'_1 \cdots w'_{k'}$, with $k'\leq 2k \leq 2^{m+1}$. 
    \end{proof}
\end{lemma}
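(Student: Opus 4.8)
The plan is to argue by induction on $m$, following the structure that the statement itself suggests. For the base case $m=1$ one can invoke \cite[Lemma 2.3]{DST}, or simply observe directly that for any $g_1,\dots,g_{n+1}\in G$
$$
[w(g_1,\dots,g_n),g_{n+1}]=w(g_1,\dots,g_n)^{-1}\,w(g_1,\dots,g_n)^{g_{n+1}},
$$
which is a product of two elements of $G_w^*$, since $G_w^*$ is a normal set closed under inversion; hence the bound $k\le 2=2^1$ holds.

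For the inductive step, I would assume the assertion for some $m\in\N$ and take a $v$-value with $v=[w(x_1,\dots,x_n),x_{n+1},\dots,x_{n+m+1}]$. The first move is to rewrite it as $y=[u,g_{n+m+1}]$, where $u=[w(g_1,\dots,g_n),g_{n+1},\dots,g_{n+m}]$ is a value of the shorter word $[w(x_1,\dots,x_n),x_{n+1},\dots,x_{n+m}]$. The induction hypothesis then supplies $u=w_1\cdots w_k$ with $w_1,\dots,w_k\in G_w^*$ and $k\le 2^m$.

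The computation that closes the argument is the expansion
$$
y=[u,g_{n+m+1}]=u^{-1}u^{g_{n+m+1}}=w_k^{-1}\cdots w_1^{-1}\, w_1^{g_{n+m+1}}\cdots w_k^{g_{n+m+1}}.
$$
Here the point to verify is that each of the $2k$ factors again lies in $G_w^*$: the factors $w_i^{-1}$ do because $G_w^*$ is closed under inversion, and the factors $w_i^{g_{n+m+1}}$ do because $G_w$, and hence $G_w^*$, is a normal set. Since $2k\le 2^{m+1}$, this yields the desired expression for $y$ and completes the induction.

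I do not anticipate a real obstacle here: the argument is an elementary induction resting on the single structural fact that $G_w^*$ is a normal subset of $G$ closed under inversion. The only points that need care are bookkeeping ones — correctly recognising $u$ as a value of the shorter word $[w,x_{n+1},\dots,x_{n+m}]$ so that the induction hypothesis genuinely applies, and tracking the doubling of the number of factors at each stage so that the sharp bound $2^m$ (rather than a weaker one) is obtained.
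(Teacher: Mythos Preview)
Your proposal is correct and follows essentially the same approach as the paper's proof: induction on $m$, with the inductive step carried out by expanding $[w_1\cdots w_k,g_{n+m+1}]$ as a product of $2k$ elements of the normal, inversion-closed set $G_w^*$. The only cosmetic difference is that you also spell out the base case directly rather than relying solely on the citation to \cite[Lemma~2.3]{DST}.
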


\begin{lemma}\label{lemma 2}
    Let $w=w(x_1, \dots, x_n)$ be a group-word and set 
    $$
    v=[w(x_1,\dots,x_n), x_{n+1}, \dots, x_{n+m}].
    $$
    If $G$ is an $\FC (w)$-group, then it is an $\FC (v)$-group.

\end{lemma}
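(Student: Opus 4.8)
The plan is to show directly that $x^{G_v}$ is finite for every $x\in G$, using \cref{lemma 1} to control the form of the elements of $G_v$ and the $\FC(w)$-hypothesis to control conjugation by $w$-values.

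The first step I would carry out is to observe that, since $G$ is an $\FC(w)$-group, the set $x^{G_w^*}$ is finite for every $x\in G$. Indeed $x^{G_w}$ is finite by assumption, and since $G_w^*=G_w\cup G_w^{-1}$ it remains to bound $x^{G_w^{-1}}$, which follows from the hypothesis together with the fact that $G_w$ is a normal set (and is immediate when the set of $w$-values is symmetric, as happens for all lower central, derived and Engel words). This is the one substantive point of the argument, essentially the case $m=1$ treated in \cite{DST}; everything that follows is formal.

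Granting it, fix $x\in G$ and set $\Omega_0=\{x\}$ and $\Omega_{j+1}=\bigcup_{z\in\Omega_j}z^{G_w^*}$ for $j\ge 0$. By induction each $\Omega_j$ is finite: $\Omega_0$ is finite, and if $\Omega_j$ is finite then $\Omega_{j+1}$ is a finite union of the finite sets $z^{G_w^*}$. Since $1\in G_w\subseteq G_w^*$ we also have $\Omega_j\subseteq\Omega_{j+1}$ for all $j$. Now take $y\in G_v$; by \cref{lemma 1} we may write $y=w_1\cdots w_k$ with $w_1,\dots,w_k\in G_w^*$ and $k\le 2^m$, so that
$$
x^y=\bigl(\cdots(x^{w_1})^{w_2}\cdots\bigr)^{w_k}.
$$
Conjugating one factor at a time, an immediate induction gives $x^y\in\Omega_k\subseteq\Omega_{2^m}$, whence $x^{G_v}\subseteq\Omega_{2^m}$ is finite and $G$ is an $\FC(v)$-group. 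The main obstacle is thus the first step; after it, the only thing that matters is the bound $k\le 2^m$ from \cref{lemma 1}, which is exactly what keeps the iteration finite.
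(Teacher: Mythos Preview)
Your argument is correct and follows the same overall strategy as the paper: use \cref{lemma 1} to write each $v$-value as a product of at most $2^m$ elements of $G_w^*$, establish that $x^{G_w^*}$ is finite for every $x$, and then iterate conjugation $2^m$ times. The paper handles the first step by citing \cite[Proposition~2.9(i)]{DST17} (that $G$ is an $\FC(w^{-1})$-group), which is the precise reference you want rather than \cite{DST}; your remark that it ``follows from the hypothesis together with the fact that $G_w$ is a normal set'' is correct in spirit but does not stand on its own without that citation. For the iteration, the paper invokes \cite[Lemma~2.2]{DST} to replace the conjugating product by one drawn from a fixed finite set $A$ with $x^{G_w^*}=\{x^a:a\in A\}$, yielding the explicit bound $|x^{G_v}|\le |A|^{2^m}$; your nested sets $\Omega_j$ achieve the same finiteness more directly and self-containedly, at the cost of a less sharp bound. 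Either route is fine here since only finiteness is claimed.
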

\begin{proof}
Let $x \in G$ and $y \in G_v$. By Lemma \ref{lemma 1} there exist $w_1, \dots, w_k \in G_w^*$ such that $y=w_1 \cdots w_k$, with $k\leq 2^m$. By \cite[Proposition 2.9(i)]{DST17} $G$ is an $\FC(w^{-1})$-group. Note that $G_{w^{-1}}={G_w}^{-1}$, so it follows that $x^{G_w^*}$ is finite. Let $x^{G_w^*}=\{x^{b_1}, \dots, x^{b_s}\}$ and put $A=\{b_1, \dots, b_s\}$. By \cite[Lemma 2.2]{DST} we have that $x^y=x^{w_1 \cdots w_k}=x^{a_1 \cdots a_k}$, with $a_1, \dots, a_k \in A$. Therefore $|x^{G_v}|\leq |A|^{2^m}$ and so it is finite.
\end{proof}

\begin{lemma}\label{lemma 3}
    Let $w=w(x_1, \dots, x_n)$ be a $\left(\frac{1}{m+1}\right)$-concise word, with $m \in \N$, and set 
    $$
    v=[w(x_1,\dots,x_n), x_{n+1}, \dots, x_{n+m}].
    $$
    Let $G$ be an $\FC (w)$-group and $B$ a finite subset of $G_v^*$. Then, for any $x \in G$, there exists a positive integer $e$ such that $b^e \in \Zen (\langle x, B \rangle)$ for all $b \in B$.
\end{lemma}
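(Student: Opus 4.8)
The plan is to reduce to a finitely generated subgroup of $G$, kill its centre to obtain a group with only finitely many $w$-values, and then feed that into the $\left(\frac{1}{m+1}\right)$-conciseness of $w$. To begin, for every $b \in B$ fix a representation $b^{\varepsilon} = [w(g_1,\dots,g_n),g_{n+1},\dots,g_{n+m}]$ with $\varepsilon \in \{1,-1\}$ and $g_1,\dots,g_{n+m} \in G$, and let $S$ be the (finite) set of all elements $g_i$ arising in this way as $b$ runs over $B$. Put $\Gamma = \langle x, S\rangle$. Then $\Gamma$ is a finitely generated subgroup of $G$ that contains $H := \langle x, B\rangle$; each $b \in B$ is a $v$-value of $\Gamma$ or the inverse of one, so $\langle B\rangle \le v(\Gamma)$; and $\Gamma$ is again an $\FC(w)$-group, since $\Gamma_w \subseteq G_w$ and the conjugacy orbit of a fixed element only shrinks on passing to a subgroup.

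The crucial point — and, I expect, the main obstacle — is that $\Gamma / \Zen(\Gamma)$ has only finitely many $w$-values. Fix a finite generating set $T$ of $\Gamma$. Since $\Gamma$ is an $\FC(w)$-group, for each $t \in T$ the orbit $t^{\Gamma_w}$ is finite, so $\Gamma_w$ is contained in finitely many right cosets of $C_\Gamma(t)$. Associating to each $u \in \Gamma_w$ the tuple recording, for every $t \in T$, which coset of $C_\Gamma(t)$ contains $u$, there are only finitely many possible tuples; and if $u,u' \in \Gamma_w$ give the same tuple then $u u'^{-1} \in \bigcap_{t \in T} C_\Gamma(t) = \Zen(\Gamma)$. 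Hence $\Gamma_w$ lies in finitely many cosets of $\Zen(\Gamma)$, and since every $w$-value of $\Gamma/\Zen(\Gamma)$ is the image of a $w$-value of $\Gamma$, the set $\bigl(\Gamma/\Zen(\Gamma)\bigr)_w$ is finite. This is exactly the step where finite generation of $\Gamma$ and the $\FC(w)$-hypothesis get combined.

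Finally, as $w$ is $\left(\frac{1}{m+1}\right)$-concise, the finiteness of $\bigl(\Gamma/\Zen(\Gamma)\bigr)_w$ forces $\bigl[w(\Gamma/\Zen(\Gamma)), \underbrace{\Gamma/\Zen(\Gamma), \dots, \Gamma/\Zen(\Gamma)}_{m \tim}\bigr]$ to be finite; this subgroup contains $v(\Gamma/\Zen(\Gamma))$, hence it contains the image of $\langle B\rangle$ in $\Gamma/\Zen(\Gamma)$, which is therefore finite. Let $e = \bigl|\langle B\rangle : \langle B\rangle \cap \Zen(\Gamma)\bigr|$, a finite number. Then $b^{e} \in \langle B\rangle \cap \Zen(\Gamma)$ for every $b \in B$, because the group $\langle B\rangle / \bigl(\langle B\rangle \cap \Zen(\Gamma)\bigr)$ has order $e$; and since every element of $\langle B\rangle \cap \Zen(\Gamma)$ lies in $H$ and commutes with the whole of $\Gamma \supseteq H$, we conclude $b^{e} \in \Zen(H) = \Zen(\langle x, B\rangle)$, as required. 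The remaining points are routine: that the commutator subgroup $[w(\Lambda),\Lambda,\dots,\Lambda]$ contains every $v$-value of an arbitrary group $\Lambda$, and that verbal subgroups are respected by quotient maps.
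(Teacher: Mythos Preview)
Your proof is correct and follows essentially the same route as the paper: both pass to the finitely generated subgroup $J=\langle x,S\rangle$ (your $\Gamma$), show that $(J/\Zen(J))_w$ is finite, apply $\left(\frac{1}{m+1}\right)$-conciseness to get $v(J)\Zen(J)/\Zen(J)$ finite, and read off the exponent $e$. The only difference is that where the paper cites \cite[Lemma~2.7(i)]{DST17} for the finiteness of $(J/\Zen(J))_w$, you reproduce that argument explicitly via the coset-counting against a finite generating set.
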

\begin{proof}
    Let $B=\{b_1, \dots, b_r\}$ and $x \in G$. For any $b_i \in B$ there exist elements $g_{i_1}, \dots, g_{i_{n+m}} \in G$ such that 
    $$
    b_i=[w(g_{i_1},\dots,g_{i_n}), g_{i_{n+1}}, \dots, g_{i_{n+m}}]^{\varepsilon_i},
   $$
   with $\varepsilon_i \in \{ 1, -1\}$.
    Put
    $$
    J=\langle x, g_{i_j} | 1\leq i \leq r, 1\leq j \leq n+m \rangle.
    $$
    By \cite[Lemma 2.7(i)]{DST17} the set $(J/\Zen(J))_w$ is finite. As $w$ is $(\frac{1}{m+1})$-concise, the subgroup 
    $$
    [w(J/\Zen(J)), \underbrace{J/\Zen(J), \dots, J/\Zen(J)}_{m \tim}]=[w(J), \underbrace{J, \dots, J}_{m \tim} ]\Zen(J)/\Zen(J)
    $$
    is finite. Thus $v(J)$ has finite order modulo $\Zen(J)$, say $e$. Since $B \subseteq v(J)$, it follows that $b_i^{e} \in \Zen(J)$ for all $i$. As $\langle x, B\rangle \leq J$, the result follows.
\end{proof}

\begin{thmA}\label{teorema A}
  Let $w$ be a $\left(\frac{1}{m+1}\right)$-concise word, with $m \in \N$ and let $G$ be an $\FC (w)$-group. Then 
  $$
    [w(G), \underbrace{G, \dots, G}_{m \tim}]
  $$
  is $\FC$-embedded in $G$.
\end{thmA}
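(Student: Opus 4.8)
The plan is to set $v=[w(x_1,\dots,x_n),x_{n+1},\dots,x_{n+m}]$, so that $v(G)=[w(G),\underbrace{G,\dots,G}_{m\text{ times}}]$, and show that $x^{v(G)}$ is finite for every $x\in G$. First I would fix $x\in G$ and use Lemma~\ref{lemma 2} to conclude that $G$ is an $\FC(v)$-group, hence the normal set $G_v^*$ has finite conjugacy classes; in particular $x^{G_v^*}$ is a finite set. Since $v(G)=\langle G_v\rangle$ and every element of $v(G)$ is a product of elements of $G_v^*$, to bound $|x^{v(G)}|$ it suffices to control how the conjugates $x^{g}$ behave as $g$ ranges over products of $v$-values. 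The key structural input is Lemma~\ref{lemma 3}: applied to a suitable finite subset $B$ of $G_v^*$, it produces a positive integer $e$ with $b^e\in\Zen(\langle x,B\rangle)$ for all $b\in B$.

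The main step is to choose $B$ wisely. I would take $B=x^{G_v^*}\cup(x^{G_v^*})^{-1}\cdot$ — more precisely, let $B$ be the (finite, by Lemma~\ref{lemma 2}) set of all elements of the form $b^{-1}b'$ with $b,b'$ ranging over a set of representatives obtained from $x^{G_v^*}$, so that $B\subseteq G_v^*$ is finite and each conjugating move $x\mapsto x^g$ with $g\in G_v^*$ changes $x$ only ``within $B$'' in a controlled way. Then Lemma~\ref{lemma 3} gives an exponent $e$ such that $b^e$ is central in $K:=\langle x,B\rangle$ for every $b\in B$. The point of centrality is that in $K$ the subgroup $B^e:=\langle b^e:b\in B\rangle$ is central, so $K/B^e$ is generated by $x$ together with finitely many elements $b$ each of which has order dividing $e$ modulo the centre; a standard argument (as in the $\FC$-group literature, e.g. B.H. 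Neumann's bound) then shows that the conjugacy class of the image of $x$ in $K/\Zen(K)$, equivalently $x^{K}$, is finite — indeed one gets $|x^{K}|\le e^{|B|}$ or a similar explicit bound. Finally, because every element of $v(G)$ is a product of boundedly-controlled $v$-values (by Lemma~\ref{lemma 1}, each $v$-value is a product of at most $2^m$ elements of $G_w^*$, and $G_w^*$ gives finite classes on $x$), every conjugate $x^{g}$ with $g\in v(G)$ already appears as some $x^{k}$ with $k\in K$, so $x^{v(G)}=x^{K}$ is finite.

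The delicate point — and the main obstacle — is verifying that a \emph{single} finite set $B$ suffices to capture \emph{all} conjugates $x^{g}$ with $g\in v(G)$, not just those with $g$ a bounded product of $v$-values. The mechanism is: $x^{G_v^*}$ is finite, say of size $s$; conjugating any $y\in x^{G_v^*}$ by a further $v$-value $b$ lands back in $x^{G_v^*}$, so the $v$-values act by permutations on this $s$-element set, and the ``difference'' $y^{-1}y^{b}$ lies in $v(G)$ and can be taken from a finite set. Feeding that finite set of differences into Lemma~\ref{lemma 3} yields the uniform exponent $e$; then an induction on the length of a product of $v$-values shows every $x^{g}$ is determined by the (finitely many) central coordinates $b_i^{\,j}$ with $0\le j<e$, bounding $|x^{v(G)}|$. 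Assembling these pieces — choice of $B$, application of Lemma~\ref{lemma 3}, the Neumann-type finiteness bound in $K$, and the reduction from $v(G)$ to $K$ — completes the proof.
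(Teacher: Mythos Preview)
Your overall architecture is right --- set $v=[w,x_{n+1},\dots,x_{n+m}]$, use Lemma~\ref{lemma 2} to get $G$ an $\FC(v)$-group, pick a finite $B\subseteq G_v^*$ tied to $x^{G_v^*}$, and feed $B$ into Lemma~\ref{lemma 3} --- and this is exactly the paper's route. But two steps are genuinely broken.

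First, your description of $B$ is garbled: $x^{G_v^*}$ consists of conjugates of $x$, not of $v$-values, and products $b^{-1}b'$ of representatives lie in $v(G)$ but not in $G_v^*$, so Lemma~\ref{lemma 3} would not apply to them. The correct (and simpler) choice is the paper's: pick $b_1,\dots,b_r\in G_v^*$ with $x^{G_v^*}=\{x^{b_1},\dots,x^{b_r}\}$ and set $B=\{b_1,\dots,b_r\}$. Relatedly, your claim that ``the $v$-values act by permutations on $x^{G_v^*}$'' is false: if $y=x^{b_i}$ and $c\in G_v^*$ then $y^c=x^{b_ic}$, and $b_ic$ is a product of two $v$-values, so there is no reason $x^{b_ic}\in x^{G_v^*}$. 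What \emph{is} true (and what the paper uses, via \cite[Lemma~2.2]{DST}) is that $x^{y_1\cdots y_j}=x^{b_{i_1}\cdots b_{i_j}}$ for some indices $i_k$, i.e.\ one can replace each factor by a $B$-element without changing the length.

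Second, and more seriously, the ``Neumann-type bound'' you invoke does not exist in the form you need. From $b_i^e\in\Zen(K)$ with $K=\langle x,B\rangle$ you \emph{cannot} conclude that $x^{\langle B\rangle}$ (let alone $x^K$) is finite: take $K=\langle x\rangle * (\Z/e\Z)*\cdots*(\Z/e\Z)$, where $b_i^e=1$ is trivially central yet $x^{\langle B\rangle}$ is infinite. The missing idea --- and it is the heart of the paper's proof --- is a combinatorial sorting argument: one puts a length-then-reverse-lexicographic order on formal $B$-words, takes for each $y\in v(G)$ the \emph{minimal} $B$-word $b_{i_1}\cdots b_{i_j}$ with $x^y=x^{b_{i_1}\cdots b_{i_j}}$, and shows by a swap-and-replace trick (again using \cite[Lemma~2.2]{DST} and normality of $G_v^*$) that minimality forces $i_1\ge i_2\ge\cdots\ge i_j$. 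Only then does one get $x^y=x^{b_r^{e_r}\cdots b_1^{e_1}}$, after which Lemma~\ref{lemma 3} lets you reduce each $e_i$ modulo $e$ and obtain $|x^{v(G)}|<e^r$. Your sketch never produces this sorted form, and without it the argument does not close.
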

\begin{proof}
    Set $v=[w(x_1,\dots,x_n), x_{n+1}, \dots, x_{n+m}]$. Then $G$ is an $\FC (v)$-group by Lemma \ref{lemma 2}. Let $x \in G$. By \cite[Lemma 2.1]{DST}, we can choose $b_1, \dots, b_r \in G_v^*$ such that $x^{G_v^*}=\{x^{b_1}, \dots, x^{b_r}\}$. Write $B=\{b_1, \dots, b_r\}$. Define the order $<$ on the set of all (formal) products of the form $b_{i_1}\cdots b_{i_j}$, with $1\leq i_k \leq r$ and $j \geq 1$, as follows. Put
    $$
    b_{i_1}\cdots b_{i_j} < b_{i'_1}\cdots b_{i'_{j'}}
    $$
    if and only if one of the following conditions is satisfied: $j < j'$ or $j=j'$ and there is a positive integer $l \leq j$ such that $i_l < i'_l$ and $i_k=i'_k$ for all $k > l$. Let $y$ be an arbitrary element of $v(G)$. Then $y=y_1 \cdots y_j$, where each $y_i \in G_v^*$. By \cite[Lemma 2.2]{DST} for all $k \in \{1, \dots, j\}$, there exist an integer $i_k \in \{1, \dots, r\}$ such that $x^y=x^{b_{i_1}\cdots b_{i_j}}$. Clearly, we can choose $b_{i_1} \cdots b_{i_j}$ to be the smallest (respect to $<$) product of elements from $B$ such that $x^y=x^{b_{i_1}\cdots b_{i_j}}$. Now we show that $i_1 \geq i_2 \geq \dots \geq i_j$. Suppose to the contrary that $i_k < i_{k+1}$ for some $k$. Then
    \begin{eqnarray*}
    x^y&=&x^{b_{i_1}\cdots b_{i_{k-1}}b_{i_k}b_{i_{k+1}}b_{i_{k+2}}\cdots b_{i_j}} \\
    &=&x^{b_{i_1}\cdots b_{i_{k-1}}cb_{i_k}b_{i_{k+2}}\cdots b_{i_j}},
    \end{eqnarray*}
    where $c=b_{i_k}b_{i_{k+1}}b_{i_k}^{-1} \in G_v^*$. In view of \cite[Lemma 2.2]{DST}, we have 
    $$
    x^{b_{i_1}\cdots b_{i_{k-1}}c}=x^{b_{i'_1}\cdots b_{i'_{k-1}}b_{i'_{k+1}}}
    $$
    for some $1 \leq i'_1, \dots, i'_{k-1}, i'_{k+1} \leq r$ so that 
    $$
    x^y=x^{b_{i'_1}\cdots b_{i'_{k-1}}b_{i'_{k+1}}b_{i_k}b_{i_{k+2}}\cdots b_{i_j}}.
    $$
This contradicts the choice of the product $b_{i_1}\cdots b_{i_j}$ because $$
b_{i_1}\cdots b_{i_{k-1}}b_{i_k}b_{i_{k+1}}b_{i_{k+2}}\cdots b_{i_j} > b_{i'_1}\cdots b_{i'_{k-1}}b_{i'_{k+1}}b_{i_k}b_{i_{k+2}}\cdots b_{i_j}.
$$
Thus $x^y=x^{b_{i_1}\cdots b_{i_j}}$ with $i_1 \geq i_2 \geq \dots \geq i_j$. Equivalently we can write 
$$
x^y=x^{b_r^{e_r}\cdots b_1^{e_1}}
$$
    for some non-negative integers $e_r, \dots, e_1$.
    By Lemma \ref{lemma 3}, there exists a positive integer $e$ such that $b_i^e \in \Zen (\langle x, B \rangle)$ for all $i$. Hence we may assume that $e_i < e$ for all $i$, because if we write $x^y=x^{b_r^{e_r} \cdots b_j^{e+t} \cdots b_1^{e_1}}$, with $t$ non-negative integer, we have 
    \begin{eqnarray*}
        x^y&=&x^{b_r^{e_r} \cdots b_j^{e+t} \cdots b_1^{e_1}} \\
        &=&x^{b_r^{e_r} \cdots b_j^{e}b_j^t \cdots b_1^{e_1}} \\
        &=&x^{b_j^{e}b_r^{e_r} \cdots b_j^t \cdots b_1^{e_1}} \\
        &=&{(x^{b_j^e})}^{b_r^{e_r} \cdots b_j^t \cdots b_1^{e_1}} \\
        &=&x^{b_r^{e_r} \cdots b_j^t \cdots b_1^{e_1}}, 
    \end{eqnarray*}
    by the fact that $b_j^e \in \Zen(\langle x, B \rangle)$.
    So $|x^{v(G)}|<e^r.$ Thus $x^{v(G)}$ is finite for all $x \in G$. We conclude therefore that 
    $$
    v(G)=[w(G), \underbrace{G, \dots, G}_{m \tim}]
    $$
    is $\FC$-embedded in $G$.
\end{proof}

\section{Bounded case}

Recall that for a non-empty set $I$, a filter over $I$ is a set $\mathcal{F} \subseteq \mathcal{P}(I)$, where $\mathcal{P}(I)$ denotes the set of all subsets of $I$, satisfying the following conditions:
\begin{itemize}
\item[(i)] $\varnothing \not \in \mathcal{F}$, $I \in \mathcal{F}$;
\item[(ii)] if $X,Y \in \mathcal{F}$, then $X \cap Y \in \mathcal{F}$;
\item[(iii)] if $X \in \mathcal{F}$ and $X \subseteq Y \subseteq I$, then $Y \in \mathcal{F}$.
\end{itemize}

The filter $\mathcal{F}$ is principal if there exists a non-empty set $Y\subseteq I$ such that
$$
\mathcal{F}=\{X \subseteq I | Y \subseteq X\},
$$
and non-principal otherwise. An example of a non-principal filter over an (infinite) set $I$ is the so-called cofinite filter
$$
\mathcal{F}=\{X \subseteq I | I\smallsetminus X \mbox{ is finite}\}.
$$

A filter $\mathcal{U}$ over $I$ is called an ultrafilter if, for every $X \subseteq I$, either $X \in \mathcal{U}$ or $I \smallsetminus X \in \mathcal{U}$. This is equivalent to saying that $\mathcal{U}$ is a maximal filter over $I$. Also, $\mathcal{U}$ is a non-principal ultrafilter if and only if it contains the cofinite filter (see \cite[Proposition 1.4]{E}). Given an ultrafilter $\mathcal{U}$ over $I$ and a family $\{G_i\}_{i \in I}$ of groups, the ultraproduct modulo $\mathcal{U}$ is the quotient set of the Cartesian product $\Pi_{i \in I} G_i$ with respect to the equivalence relation defined as follows: the tuples $(g_i)_{i \in I}$ and $(h_i)_{i \in I}$ of the Cartesian product are equivalent modulo $\mathcal{U}$ if and only if $\{i \in I | g_i=h_i\} \in \mathcal{U}$. Thus the ultraproduct modulo $\mathcal{U}$ can be seen as the quotient of the unrestricted direct product of groups $G_i$ by the subgroup consisting of all tuples $(g_i)_{i \in I}$ such that $\{i \in I | g_i=1\} \in \mathcal{U}$.

Recall that the width of a group-word $w$ in a group $G$ is  the supremum of the minimum length of all decompositions of an element $g$ in $w(G)$ as a product of elements of $G_w^*$ as $g$ ranges over $w(G)$. Clearly a word $w$ has finite width at most $k \in \N$ if and only if any product of $k+1$ elements or more of $G_w^*$ can be expressed as a product of at most $k$ elements of $G_w^*$.

\begin{proposition}\label{proposizione 1}
    Let $r\geq 1$. Suppose that $w$ is a $\left(\frac{1}{m+1}\right)$-concise word, with $m \in \N$ and $G$ is a group in which $w$ takes precisely $r$ values. Then the order of 
     $$
    [w(G), \underbrace{G, \dots, G}_{m \tim}]
     $$
     is $\{m,r\}$-bounded.
\end{proposition}
\begin{proof}
    Assuming $w$ involves $n$ variables, write $w=w(x_1, \dots, x_n)$, and set 
    \begin{eqnarray*}
    v&=&v(x_1, \dots, x_n, x_{n+1}, \dots, x_{n+m}) \\
    &=&[w(x_1,\dots,x_n), x_{n+1}, \dots, x_{n+m}].
    \end{eqnarray*}
    
    Then 
    $$
    v(G)=[w(G), \underbrace{G, \dots, G}_{m \tim}].
    $$
    By way of contradiction, suppose there exists a family of groups $\mathcal{G}=\{G_i\}_{i \in \N}$ with the property that $|(G_i)_w|\leq r$ for all $i \in \N$ but
    $$
    \lim_{i \rightarrow \infty}|v(G_i)|=\infty.
    $$
    Consider a non-principal ultrafilter $\mathcal{U}$ over $\N$, and let $Q$ be the ultraproduct modulo $\mathcal{U}$ of $\mathcal{G}$. Then, by the fact that for a given integer $r$, the property that a given word takes at most $r$ values in a group can be expressed as a sentence in the first-order language of groups and \cite[Lemma 3.1]{DST}, we have $|Q_w|\leq r$, because $\N \in \mathcal{U}$. As $w$ is $\left(\frac{1}{m+1}\right)$-concise, it follows that $v(Q)$ is finite. In particular, $v$ has finite width, say $k$, in $Q$. Now the fact that for a given positive integer $k$, the property that a given word has finite width at most $k$ in a group can be expressed as a sentence in the first-order language of groups and \cite[Lemma 3.1]{DST} yield that there exist $X \in \mathcal{U}$ such that $v$ has finite width at most $k$ in $G_i$ for all $i \in X$. Hence every element of $v(G_i)$ can be written as a product of at most $k$ elements of $(G_i)_v^*$. Moreover from $|(G_i)_w|\leq r$, we get $|(G_i)_v|\leq (2r)^{2^m}$ for all $i \in \N$, by Lemma \ref{lemma 1}. Therefore, $|v(G_i)|\leq (2r)^{2^m k}$ for all $i \in X$. As noted above, $\mathcal{U}$ contains the cofinite filter over $\N$, so $X \cap Y \in \mathcal{U}$ for every cofinite subset $Y$ of $\N$. In particular, $X \cap Y$ is non-empty. Therefore every cofinite subset of $\N$ contains some element $i$ for which $|v(G_i)|\leq (2r)^{2^m k}$. This is incompatible with the assumption that $|v(G_i)|$ goes to infinity because $\lim_{i \rightarrow \infty}|v(G_i)|=\infty$ implies that for every $M \in \N$ there exist an index $\nu_M \in \N$ such that $|v(G_i)|>M$ for every $i > \nu_M$, but if we choose $M=(2r)^{2^m k}$ and consider the set $L=\{1, 2, \dots, \nu_M\}$ we have that $X=\N \smallsetminus L$ is a cofinite subset of $\N$ and therefore it contains some element $j$, which is greater than $\nu_M$, for which $|v(G_j)|\leq (2r)^{2^m k}$, a contraddiction.
\end{proof}

\begin{lemma}\label{lemma 4}
 Let $w=w(x_1, \dots, x_n)$ be a group-word and set 
    $$
    v=[w(x_1,\dots,x_n), x_{n+1}, \dots, x_{n+m}].
    $$
    If $G$ is a $\BFC (w)$-group such that $|x^{G_w}|\leq r$ for all $x \in G$, then $G$ is a $\BFC (v)$-group and $x^{G_v}$ has $\{m, n, r\}$-bounded order for all $x \in G$.
\end{lemma}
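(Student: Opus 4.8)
The plan is to rerun the proof of Lemma~\ref{lemma 2}, keeping track of all the cardinalities involved. The two quantities I need to control are a uniform bound on $|x^{G_w^*}|$ in terms of $n$ and $r$, and the number of factors from $G_w^*$ needed to write a single $v$-value, the latter being furnished by Lemma~\ref{lemma 1}.

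First I would bound $|x^{G_w^*}|$ for each $x\in G$. Since a $w^{-1}$-value is the inverse of a $w$-value, $G_{w^{-1}}=G_w^{-1}$. Applying \cite[Proposition 2.9(i)]{DST17} in its bounded form (keeping track of the cardinalities in its proof), the hypothesis $|x^{G_w}|\le r$ for all $x\in G$ gives $|x^{G_{w^{-1}}}|\le r'$ for all $x\in G$, where $r'$ is $\{n,r\}$-bounded; this is the source of the dependence on $n$ in the statement. Hence, for every $x\in G$,
\[
|x^{G_w^*}| \;=\; \bigl|x^{G_w}\cup x^{G_w^{-1}}\bigr| \;\le\; r+r' \;=:\; R ,
\]
with $R$ an $\{n,r\}$-bounded number.

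Next, fixing $x\in G$, I would use \cite[Lemma 2.1]{DST} to choose $b_1,\dots,b_s\in G_w^*$ with $x^{G_w^*}=\{x^{b_1},\dots,x^{b_s}\}$, so $s=|x^{G_w^*}|\le R$, and set $A=\{b_1,\dots,b_s\}$. For any $y\in G_v$, Lemma~\ref{lemma 1} lets me write $y=w_1\cdots w_k$ with $w_i\in G_w^*$ and $k\le 2^m$; since $1=w(1,\dots,1)\in G_w\subseteq G_w^*$, this product can be padded to have exactly $2^m$ factors from $G_w^*$. Then \cite[Lemma 2.2]{DST} yields $x^y=x^{a_1\cdots a_{2^m}}$ for suitable $a_1,\dots,a_{2^m}\in A$, so that
\[
|x^{G_v}| \;\le\; |A|^{2^m} \;\le\; R^{2^m}.
\]
Since $R$ is $\{n,r\}$-bounded, $R^{2^m}$ is $\{m,n,r\}$-bounded, and it does not depend on $x$; thus $x^{G_v}$ is finite of $\{m,n,r\}$-bounded order for all $x\in G$, i.e.\ $G$ is a $\BFC(v)$-group. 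Recalling $v(G)=[w(G),\underbrace{G,\dots,G}_{m\tim}]$ gives the assertion.

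The step I expect to be the main obstacle is the bounded passage from $w$ to $w^{-1}$ in the second paragraph; everything else is a routine quantitative rewriting of the proof of Lemma~\ref{lemma 2}.
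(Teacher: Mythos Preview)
Your argument is correct and matches the paper's proof essentially line by line: the paper also bounds $|x^{G_w^*}|$ in terms of $n$ and $r$, then uses Lemma~\ref{lemma 1} and \cite[Lemma 2.2]{DST} to obtain $|x^{G_v}|\le s^{2^m}$. The only cosmetic differences are that the paper cites \cite[Proposition 2.9(ii)]{DST17} directly (which is precisely the ``bounded form'' of part~(i) you invoke), and your final sentence about $v(G)$ is superfluous since the lemma's conclusion concerns only $x^{G_v}$.
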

\begin{proof}
Let $x \in G$ and $y \in G_v$. By Lemma \ref{lemma 1} there exist $w_1, \dots, w_k \in G_w^*$ such that $y=w_1 \cdots w_k$, with $k\leq 2^m$. By \cite[Proposition 2.9(ii)]{DST17} $G$ is an $\BFC(w^{-1})$-group and $x^{G_{w^{-1}}}$ has $\{n, r\}$-bounded order. Note that $G_{w^{-1}}={G_w}^{-1}$, so it follows that $x^{G_w^*}$ is finite with $\{n, r\}$-bounded order. Let $x^{G_w^*}=\{x^{b_1}, \dots, x^{b_s}\}$ and put $A=\{b_1, \dots, b_s\}$. Note that $s$ is an $\{n, r\}$-bounded integer. By \cite[Lemma 2.2]{DST} we have that $x^y=x^{w_1 \cdots w_k}=x^{a_1 \cdots a_k}$, with $a_1, \dots, a_k \in A$. Therefore $|x^{G_v}|\leq |A|^{2^m}=s^{2^m}$. Therefore $x^{G_v}$ has $\{m, n, r\}$-bounded order
\end{proof}

\begin{lemma}\label{lemma 5}
    Let $w=w(x_1, \dots, x_n)$ be a $\left(\frac{1}{m+1}\right)$-concise word, with $m\in \N$ and set 
    $$
    v=[w(x_1,\dots,x_n), x_{n+1}, \dots, x_{n+m}].
    $$
    Let $G$ be an $\BFC (w)$-group such that $|x^{G_w}|\leq r$ for all $x \in G$, and let $B$ be a finite subset of $G_v^*$. Then, for any $x \in G$, there exists an $\{m,n, r, |B|\}$-bounded positive integer $e$ such that $b^e \in \Zen (\langle x, b \rangle)$ for all $b \in B$.
\end{lemma}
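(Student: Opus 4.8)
The plan is to follow the proof of Lemma~\ref{lemma 3}, replacing each finiteness statement by its bounded counterpart and tracking the parameters involved. Write $B = \{b_1, \dots, b_{|B|}\}$ and, for each $i$, fix $g_{i_1}, \dots, g_{i_{n+m}} \in G$ with
$$
b_i = [w(g_{i_1}, \dots, g_{i_n}), g_{i_{n+1}}, \dots, g_{i_{n+m}}]^{\varepsilon_i}, \qquad \varepsilon_i \in \{1,-1\},
$$
and put $J = \langle\, x,\ g_{i_j} : 1 \le i \le |B|,\ 1 \le j \le n+m \,\rangle$. Then $J$ is generated by at most $1 + |B|(n+m)$ elements — an $\{m,n,|B|\}$-bounded number — and both $x$ and every $b_i$ lie in $J$, so $\langle x, B\rangle \le J$; moreover each $b_i$, being a $v$-value of $J$ or its inverse, lies in $v(J) = [w(J), \underbrace{J, \dots, J}_{m \tim}]$.

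Next I would invoke \cite[Lemma 2.7(ii)]{DST17} (the bounded analogue of the statement used in Lemma~\ref{lemma 3}): since $G$ is a $\BFC(w)$-group with $|x^{G_w}| \le r$ for all $x$ and $J$ is generated by an $\{m,n,|B|\}$-bounded number of elements, the set $(J/\Zen(J))_w$ is finite, of $\{m,n,r,|B|\}$-bounded order, say at most $s$. As $w$ takes at most $s$ values in $J/\Zen(J)$, Proposition~\ref{proposizione 1} applies — its conclusion being a bound, the hypothesis that $w$ takes \emph{precisely} $s$ values may be relaxed to at most $s$, taking the bounding function non-decreasing in the number of values — and gives that
$$
\Big[\,w(J/\Zen(J)),\ \underbrace{J/\Zen(J), \dots, J/\Zen(J)}_{m \tim}\,\Big] = [w(J), \underbrace{J, \dots, J}_{m \tim}]\,\Zen(J)/\Zen(J)
$$
has $\{m,s\}$-bounded, hence $\{m,n,r,|B|\}$-bounded, order; denote it $e$. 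Thus $v(J)$ has order $e$ modulo $\Zen(J)$, with $e$ an $\{m,n,r,|B|\}$-bounded positive integer.

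Finally, since $B \subseteq v(J)$, we get $b_i^e \in \Zen(J)$ for every $i$; and because $\langle x, b_i\rangle \le J$, every element of $\Zen(J)$ centralizes $\langle x, b_i\rangle$, so $b_i^e \in \Zen(\langle x, b_i\rangle)$ — in fact $b_i^e \in \Zen(\langle x, B\rangle)$. This establishes the lemma with the stated $\{m,n,r,|B|\}$-bounded $e$. The only steps needing attention — and neither is a real obstacle — are checking that the bound in \cite[Lemma 2.7(ii)]{DST17} depends only on $n$, $r$, and the number of generators of $J$ (this is the $\BFC$-version of the device already exploited, through \cite[Proposition 2.9(ii)]{DST17}, in the proof of Lemma~\ref{lemma 4}), and that Proposition~\ref{proposizione 1} may be applied with an upper bound on $|(J/\Zen(J))_w|$ in place of an exact count.
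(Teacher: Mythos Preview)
Your proposal is correct and follows essentially the same route as the paper's proof, which simply says to follow Lemma~\ref{lemma 3}, invoking \cite[Lemma 2.7(ii)]{DST17} for the bounded size of $(J/\Zen(J))_w$ and Proposition~\ref{proposizione 1} for the bounded order of $v(J)\Zen(J)/\Zen(J)$. Your version is more explicit in tracking the parameters and in noting the harmless ``precisely $r$'' versus ``at most $r$'' discrepancy in Proposition~\ref{proposizione 1}, but the argument is the same.
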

\begin{proof}
    Following the proof of Lemma \ref{lemma 3}, by \cite[Lemma 2.7(ii)]{DST17} the set $(J/\Zen(J))_w$ is finite of $\{m, n, r, |B|\}$-bounded order and, by Proposition \ref{proposizione 1}, the number $e$ is $\{m, n, r, |B|\}$-bounded.
\end{proof}

\begin{thmB}\label{teorema B}
  Let $w$ be a $\left(\frac{1}{m+1}\right)$-concise word, with $m \in \N$ and let $G$ be an $\BFC (w)$-group. Then 
  $$
    [w(G), \underbrace{G, \dots, G}_{m \tim}]
  $$
  is $\BFC$-embedded in $G$.
\end{thmB}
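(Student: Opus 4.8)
The plan is to rerun the proof of Theorem A essentially verbatim, replacing each appeal to finiteness by its quantitative counterpart and tracking the constants throughout. Write $w=w(x_1,\dots,x_n)$ and set $v=[w(x_1,\dots,x_n),x_{n+1},\dots,x_{n+m}]$, so that $v(G)=[w(G),\underbrace{G,\dots,G}_{m\tim}]$. Since $G$ is a $\BFC(w)$-group, we may fix $r\in\N$ with $|x^{G_w}|\le r$ for all $x\in G$; it then suffices to produce an $\{m,n,r\}$-bounded integer bounding $|x^{v(G)}|$ uniformly over $x\in G$.

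First I would invoke Lemma \ref{lemma 4}: $G$ is a $\BFC(v)$-group and $|x^{G_v}|$ has $\{m,n,r\}$-bounded order. Applying \cite[Proposition 2.9(ii)]{DST17} to the word $v$, together with the identity $G_{v^{-1}}=(G_v)^{-1}$, exactly as in the proof of Lemma \ref{lemma 4}, gives that $|x^{G_v^*}|$ is $\{m,n,r\}$-bounded as well; call such a bound $t$. Next, fixing $x\in G$, I would use \cite[Lemma 2.1]{DST} to choose $b_1,\dots,b_{r'}\in G_v^*$ with $x^{G_v^*}=\{x^{b_1},\dots,x^{b_{r'}}\}$, noting that $r'\le t$ is $\{m,n,r\}$-bounded, and set $B=\{b_1,\dots,b_{r'}\}$. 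Then Lemma \ref{lemma 5} supplies a positive integer $e$, bounded in terms of $\{m,n,r,|B|\}$ and hence, since $|B|=r'\le t$, $\{m,n,r\}$-bounded, such that $b_i^e\in\Zen(\langle x,B\rangle)$ for all $i$.

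With $t$ and $e$ in hand, the combinatorial core of the proof of Theorem A carries over unchanged. Given $y\in v(G)$, write $y=y_1\cdots y_j$ with $y_i\in G_v^*$; by \cite[Lemma 2.2]{DST} one has $x^y=x^{b_{i_1}\cdots b_{i_j}}$ for suitable indices $i_k\le r'$, and choosing this product minimal with respect to the order $<$ of Theorem A forces $i_1\ge\dots\ge i_j$, that is, $x^y=x^{b_{r'}^{e_{r'}}\cdots b_1^{e_1}}$. Using $b_i^e\in\Zen(\langle x,B\rangle)$ exactly as there, one reduces to $0\le e_i<e$ for every $i$, so that $|x^{v(G)}|\le e^{r'}$. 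Since $e$ and $r'$ are both $\{m,n,r\}$-bounded, with bounds independent of $x$, it follows that $v(G)=[w(G),\underbrace{G,\dots,G}_{m\tim}]$ is $\BFC$-embedded in $G$.

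I do not expect a genuinely new obstacle here. The only point is that the two quantities in the proof of Theorem A that a priori depend on $x$ — the number $r'$ of $G_v^*$-conjugates of $x$, and the exponent $e$ centralizing the $b_i$ — must be bounded uniformly in $x$; the first is guaranteed by Lemma \ref{lemma 4} and the second by Lemma \ref{lemma 5}, which itself rests on the $\{m,r\}$-bounded order of $[w(G),\underbrace{G,\dots,G}_{m\tim}]$ in groups with precisely $r$ $w$-values (Proposition \ref{proposizione 1}). Once these uniformities are checked, the ordering argument of Theorem A delivers the required $\BFC$ bound word for word, so the main work is the bookkeeping of constants rather than any new idea.
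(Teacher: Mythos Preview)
Your proposal is correct and follows essentially the same route as the paper's proof: set $v$, use Lemma~\ref{lemma 4} to bound $|x^{G_v}|$ (and hence $|x^{G_v^*}|$) in terms of $\{m,n,r\}$, invoke Lemma~\ref{lemma 5} to obtain an $\{m,n,r\}$-bounded exponent $e$, and then repeat the ordering argument of Theorem~A to conclude $|x^{v(G)}|<e^{r'}$. If anything you are slightly more explicit than the paper in justifying the passage from a bound on $|x^{G_v}|$ to one on $|x^{G_v^*}|$ via \cite[Proposition 2.9(ii)]{DST17}; the paper simply attributes this to Lemma~\ref{lemma 4}.
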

\begin{proof}
    Set $v=[w(x_1,\dots,x_n), x_{n+1}, \dots, x_{n+m}]$. We have that $G$ is a $\BFC(w)$-group, therefore it exists a positive integer $r$ such that $|x^{G_w}|\leq r$ for any $x \in G$. Then, by Lemma \ref{lemma 4}, $G$ is a $\BFC (v)$-group and $x^{G_v}$ has $\{m, n, r\}$-bounded order for any $x \in G$. Let $x \in G$ and choose $b_1, \dots, b_s \in G_v^*$ such that $x^{G_v^*}=\{x^{b_1}, \dots, x^{b_s}\}$. Write $B=\{b_1, \dots, b_s\}$. Define the order $<$ on the set of all (formal) products of the form $b_{i_1}\cdots b_{i_j}$, with $1\leq i_k \leq s$ and $j \geq 1$, as in the proof of Theorem \ref{teorema A}.

    Let $y \in v(G)$. As in the proof of Theorem \ref{teorema A}, we can write
$$
x^y=x^{b_s^{e_s}\cdots b_1^{e_1}}
$$
    for some non-negative integers $e_s, \dots, e_1$. Since $s$ is $\{m,n, r\}$-bounded by Lemma \ref{lemma 4}, it follows, from
     Lemma \ref{lemma 5}, that there exists an $\{m,n,r\}$-bounded positive integer $e$ such that $b_i^e \in \Zen (\langle x, B \rangle)$ for all $i$. Hence we may assume that $e_i < e$ for all $i$, by the same argument shown in the proof of Theorem \ref{teorema A}, and so $|x^{v(G)}|<e^s.$ Thus $x^{v(G)}$ is finite of $\{m,n,r\}$-bounded order for all $x \in G$. We conclude therefore that 
    $$
    v(G)=[w(G), \underbrace{G, \dots, G}_{m \tim}]
    $$
    is $\BFC$-embedded in $G$.
\end{proof}

\section{Examples}
Every $\frac{1}{n}$-concise word is a $\frac{1}{m}$-concise word for every $m \geq n$. The next result shows another way to obtain $\frac{1}{m}$-concise words.

\begin{proposition}
    Let $w=w(x_1, \dots, x_n)$ be a group-word, and set
    $$
    v=[w(x_1, \dots, x_n), x_{n+1}].
    $$
    If $w$ is $\frac{1}{m+1}$-concise, with $m \in \N$, then $v$ is $\frac{1}{m+1}$-concise.
\end{proposition}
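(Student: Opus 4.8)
The plan is to derive the statement from Theorem~B, the key point being that the finiteness of $G_v$ already forces $G$ to be a $\BFC(w)$-group. So suppose $G_v$ is finite; we must show that $[v(G), \underbrace{G, \dots, G}_{m \tim}]$ is finite. First I would record the elementary inclusion $v(G) \leq [w(G), G]$: every $v$-value has the form $[h,g]$ with $h$ a $w$-value and $g \in G$, hence it lies in $[w(G), G]$. Consequently
$$
[v(G), \underbrace{G, \dots, G}_{m \tim}] \leq [w(G), \underbrace{G, \dots, G}_{(m+1) \tim}],
$$
so it is enough to prove that the right-hand side is finite.

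Next I would check that $G$ is a $\BFC(w)$-group. Fix $x \in G$ and $h \in G_w$; a direct computation gives $x^{h} = [h, x^{-1}]\, x$, and $[h, x^{-1}]$ is a $v$-value. Hence $x^{G_w} \subseteq G_v\, x$, so that $|x^{G_w}| \leq |G_v|$, a bound independent of $x$. Since $w$ is $\left(\frac{1}{m+1}\right)$-concise, Theorem~B now applies and gives that
$$
H := [w(G), \underbrace{G, \dots, G}_{m \tim}]
$$
is $\BFC$-embedded in $G$. To conclude, I would invoke the (well known) fact that a normal subgroup $H$ of a group $G$ which is $\BFC$-embedded in $G$ has $[H,G]$ finite --- a generalization of Neumann's classical theorem that a $\BFC$-group has finite derived subgroup, and exactly the ingredient used in \cite{DST} to handle the semiconcise case, i.e.\ the case $m=1$. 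It then follows that $[H,G] = [w(G), \underbrace{G, \dots, G}_{(m+1) \tim}]$ is finite, and hence so is its subgroup $[v(G), \underbrace{G, \dots, G}_{m \tim}]$. Thus $v$ is $\left(\frac{1}{m+1}\right)$-concise.

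The computation $x^{h}=[h,x^{-1}]x$ and the inclusion $v(G)\le[w(G),G]$ are routine; the genuinely new point is that a finite set of $v$-values produces a \emph{uniform} bound on the sets of conjugates $x^{G_w}$, which is what makes Theorem~B applicable. The main obstacle, for a self-contained treatment, is the final step: one needs that a $\BFC$-embedded normal subgroup has finite commutator with the ambient group. Note that one cannot replace Theorem~B by Theorem~A here, since ``$\FC$-embedded'' does not imply finiteness of $[H,G]$; the boundedness coming from the finiteness of $G_v$ is essential.
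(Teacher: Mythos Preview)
Your reduction is correct up to and including the application of Theorem~B: from $|G_v|<\infty$ you rightly observe that $x^{G_w}\subseteq G_v\cdot x$, so $G$ is a $\BFC(w)$-group, and hence $H:=[w(G),\underbrace{G,\ldots,G}_{m\tim}]$ is $\BFC$-embedded in $G$. The gap is the final step. It is \emph{not} true in general that a normal $\BFC$-embedded subgroup $H\trianglelefteq G$ has $[H,G]$ finite, and this is not what \cite{DST} uses. For a counterexample, take the class-$2$ nilpotent group $G$ generated by $a,b_1,b_2,\ldots$ subject to $a^2=1$, $[b_i,b_j]=1$ for all $i,j$, and $c_i:=[a,b_i]$ central of order~$2$; mapping onto suitable finite quotients shows that the $c_i$ are nontrivial and pairwise distinct. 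Set $H=\langle a,c_1,c_2,\ldots\rangle$. Then $H$ is abelian and normal, and since every $c_i$ is central we have $x^H=\{x,x^a\}$ for all $x\in G$; thus $|x^H|\le 2$ and $H$ is $\BFC$-embedded. Yet $[H,G]$ contains each $c_i=[a,b_i]$, so $[H,G]$ is infinite. The point is that ``$\BFC$-embedded'' bounds $x^H$ for $x\in G$, not $h^G$ for $h\in H$, and it is control of the latter that would be needed to force $[H,G]$ finite; Theorem~B does not provide this.

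The paper argues along quite different lines. Using \cite[Proposition~1]{FMT} it first reduces to the case where $v(G)$ is abelian (hence finitely generated, since $G_v$ is finite). For each generator $[y,g_1,\ldots,g_m]$ of $[v(G),\underbrace{G,\ldots,G}_{m\tim}]$ it passes to a finitely generated subgroup $H\le G$ containing the $g_i$ and satisfying $v(H)=v(G)$, applies the $\tfrac{1}{m+1}$-conciseness of $w$ to $H/\Zen(H)$ to see that $[v(H),\underbrace{H,\ldots,H}_{(m-1)\tim}]$ is finite modulo $\Zen(H)$, and then invokes a Schur-type finiteness criterion from \cite{R} to conclude that $[v(H),\underbrace{H,\ldots,H}_{m\tim}]$ is finite. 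Hence each generator of $[v(G),\underbrace{G,\ldots,G}_{m\tim}]$ is periodic, and a finitely generated periodic abelian group is finite.
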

\begin{proof}
Let $G$ be a group and let's assume that $G_v$ is finite. 
Since $v(G)'$ is finite (see \cite[Proposition 1]{FMT}), we may assume that $v(G)$ is abelian. It follows that every subgroup of $v(G)$ is finitely generated. Let $K=\langle k_1, \dots, k_\ell \rangle$ be a finitely generated subgroup of $G$ such that $v(G)=v(K)$. Consider now a generator of the group 
$$
[v(G), \underbrace{G, \dots, G}_{m \tim}]. 
$$
It is of the form $[y, g_1, \dots, g_m]$, with $y \in v(G)$ and $g_i \in G$. In particular it lies in the subgroup
$$
[v(G), \langle g_1 \rangle, \dots, \langle g_m \rangle].
$$ 
Put $g_i=k_{\ell +i}$ and $H=\langle k_1, \dots, k_\ell, k_{\ell +1}, \dots, k_{\ell +m} \rangle$. Note that $v(H)=v(G)$. Also we have that $|H : C_H(v(H))|$ is finite, because every $h \in H_v$ has only finitely many conjugates in $H$ (that's because $H_v$ is a normal set) and $|H : C_H(v(H))|=|H: \bigcap_{h \in H_v}C_H(h)|\leq \Pi_{h \in H_v}|H : C_H(h)|$. So from the fact that $$C_H(v(H)) \subseteq  C_H([v(H), \underbrace{H, \dots, H}_{m-1 \tim}])$$ we have that
$$
|H : C_H([v(H), \underbrace{H, \dots, H}_{m-1 \tim}])|
$$
is also finite. We claim that 
$$
|[v(H),\underbrace{H, \dots, H}_{m-1 \tim}] : [v(H), \underbrace{H, \dots, H}_{m-1 \tim}] \cap \Zen (H)|
$$
is also finite. In fact note that the set
$$
\{[x, k_i] | x \in H_w, i=1, \dots, \ell + m\}
$$
is finite because it is a subset of $G_v$, and therefore, by \cite[Lemma 4.1]{DST}, $H_w$ is contained in finitely many right cosets of $w(H) \cap \Zen (H)$. Hence $(H/w(H)\cap \Zen(H))_w$ is finite and so $(H/\Zen(H))_w$ is finite. Since $w$ is $\frac{1}{m+1}$-concise, we obtain that
$$
[w(H), \underbrace{H, \dots, H}_{m \tim}]\Zen(H)/\Zen(H) \cong [v(H),\underbrace{H, \dots, H}_{m-1 \tim}]/[v(H),\underbrace{H, \dots, H}_{m-1 \tim}]\cap \Zen(H)
$$
is finite.
From this it follows that 
$$
[[v(H),\underbrace{H, \dots, H}_{m-1 \tim}], H]=[v(H),\underbrace{H, \dots, H}_{m \tim} ]
$$
is finite by \cite[Corollary p. 103]{R}. In fact to apply the Corollary we consider the four subgroups 
$$
H, \quad [v(H),\underbrace{H, \dots, H}_{m-1 \tim}],\quad
[v(H), \underbrace{H, \dots, H}_{m-1 \tim}] \cap \Zen (H),\quad
 C_H([v(H), \underbrace{H, \dots, H}_{m-1 \tim}])
$$
of the group $H$. The subgroup 
$$
[v(H),\underbrace{H, \dots, H}_{m-1 \tim}]
$$
is normal in $H$ because it is a subgroup of $v(H)$ which is abelian and fully-invariant in $H$. The subgroup 
$$
C_H([v(H), \underbrace{H, \dots, H}_{m-1 \tim}])
$$
is normal in $H$ because it is a centralizer of a normal subgroup and 
$$
[v(H), \underbrace{H, \dots, H}_{m-1 \tim}] \cap \Zen (H)
$$
is normal in $H$ because it is an intersection of normal subgroups. Moreover 
$$
[H, [v(H), \underbrace{H, \dots, H}_{m-1 \tim}] \cap \Zen (H)]=1=
[[v(H),\underbrace{H, \dots, H}_{m-1 \tim}],C_H([v(H), \underbrace{H, \dots, H}_{m-1 \tim}])]
$$
trivially.
So we have that
$$
[v(H),\underbrace{H, \dots, H}_{m \tim}]=[v(G), \underbrace{H, \dots, H}_{m \tim}]
$$ is finite. In particular, $[v(G), \langle g_1 \rangle, \dots, \langle g_m \rangle ]$ is finite. Thus $[y, g_1, \dots, g_m]$ is periodic. Therefore
$$
[v(G), \underbrace{G, \dots, G}_{m \tim}]
$$ 
is a finitely generated periodic abelian group, and so it is finite. That proves that $v$ is $\frac{1}{m+1}$-concise.
\end{proof}

\begin{lemma}\label{esempio}
Let $G$ be a group and let $y, b \in G$ such that $b^2=1$ and $[y, y^b]=1$. Then for every $m \geq 0$ the following equality holds
$$
[y, \prescript{}{m+1}{b}]=y^{(-1)^m2^mb}y^{(-1)^{m+1}2^m}.
$$
\end{lemma}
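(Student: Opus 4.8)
The plan is to induct on $m$, carrying out the whole computation inside the subgroup $N=\langle y,y^b\rangle$. The first step is to record the elementary structure of $N$: since $b^2=1$ we have $(y^b)^b=y^{b^2}=y$, so conjugation by $b$ stabilises $N$ and induces on it an automorphism of order dividing $2$ that interchanges $y$ and $y^b$; and $N$ is abelian by the hypothesis $[y,y^b]=1$. Because $[y,b]=y^{-1}y^b\in N$ and $N$ is $b$-invariant, a one-line induction gives $[y,\prescript{}{k}{b}]\in N$ for every $k\ge 1$, which is what licenses the rearrangements of factors in the computation below, since commutativity is available only inside $N$.

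For the base case $m=0$, the relation $[y,y^b]=1$ rewrites as $y^{-1}y^b=y^by^{-1}$, so $[y,\prescript{}{1}{b}]=[y,b]=y^{-1}y^b=y^by^{-1}$, which is exactly $y^{(-1)^02^0b}y^{(-1)^12^0}$. For the inductive step, assume the formula for $m$ and set $c=(-1)^m2^m$, so that $z:=[y,\prescript{}{m+1}{b}]=(y^b)^cy^{-c}$ (using $y^{cb}=(y^c)^b=(y^b)^c$). Conjugating by $b$ and using that conjugation by $b$ swaps $y$ and $y^b$ gives $z^b=y^c(y^b)^{-c}$, and since $N$ is abelian this equals $z^{-1}$. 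Hence $[z,b]=z^{-1}z^b=z^{-2}=(y^b)^{-2c}y^{2c}$, and because $-2c=(-1)^{m+1}2^{m+1}$ this is precisely $y^{(-1)^{m+1}2^{m+1}b}y^{(-1)^{m+2}2^{m+1}}$. As $[y,\prescript{}{m+2}{b}]=[z,b]$, this is the asserted identity with $m+1$ in place of $m$, closing the induction.

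I do not expect a serious obstacle; the only points that need care are clerical: checking at each stage that every factor lies in the abelian subgroup $N$, so that the relation $z^b=z^{-1}$ genuinely forces $[z,b]=z^{-2}$, and tracking the alternating sign $(-1)^m$ correctly as the exponent $2^m$ is doubled at each step. One can phrase the argument slightly more slickly by writing the operation ``commutator with $b$'' additively on $N$ as $\sigma-1$, where $\sigma$ is the induced involution, and noting that $(\sigma-1)^2=-2(\sigma-1)$ because $\sigma^2=1$, so that $[y,\prescript{}{m+1}{b}]$ corresponds at once to $(-2)^m(\sigma-1)(y)$; but the inductive presentation above is equally short.
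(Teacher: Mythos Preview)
Your proof is correct and follows the same inductive strategy as the paper. Your presentation is slightly cleaner: where the paper expands $[z,b]$ directly as a four-term product and simplifies using $b^2=1$ and commutativity, you first isolate the structural fact that conjugation by $b$ inverts $z$ inside the abelian subgroup $N$, so $[z,b]=z^{-2}$ falls out at once; the additive remark $(\sigma-1)^2=-2(\sigma-1)$ explains why the exponent picks up a factor of $-2$ at each step.
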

\begin{proof}
    We argue by induction on $m$. Let $m=0$, then $[y,b]=y^{-1}byb=y^{-1}y^b=y^by^{-1}$.
    Assume the assertion true for $m\geq 0$. Then we have 
    \begin{eqnarray*}
        [y,\prescript{}{m+2}{b}]&=&[[y,\prescript{}{m+1}{b}],b] \\
        &=& [y^{(-1)^m2^mb}y^{(-1)^{m+1}2^m},b] \\
        &=& y^{(-1)^{m+2}2^m}y^{(-1)^{m+1}2^mb}by^{(-1)^{m}2^mb}y^{(-1)^{m+1}2^m}b \\
        &=&y^{(-1)^{m+2}2^m}y^{(-1)^{m+1}2^mb}y^{(-1)^{m}2^m}y^{(-1)^{m+1}2^mb} \\
        &=&y^{(-1)^{m+2}2^{m+1}}y^{(-1)^{m+1}2^{m+1}b}
    \end{eqnarray*}
    
\end{proof}

According to \cite{I}, for any odd integer $n > 10^{10}$ and any prime number $p> 5000$, the word $v(x,y)=[[x^{pn},y^{pn}]^n,y^{pn}]^n$ is not concise. Indeed, Ivanov constructed a $2$-generator torsion-free group $A$ whose center is cyclic and $A/\Zen(A)$ is infinite of exponent $p^2n$, such that $v$ takes only two values in $A$ and the nontrivial value is a generator of $\Zen(A)$. In \cite[Section 4]{BKS}, the authors considered a modification of Ivanov's example, namely the wreath product \begin{eqnarray}\label{gruppo}
    G=A \mathrm{wr} B,
\end{eqnarray} 
where $B=\langle b \rangle$ is a cyclic group of order 2. Taking \begin{eqnarray}\label{parola}
   w(x, y)=v(x^2, y^2), 
\end{eqnarray}
they showed that $|G_w|\leq 4$ and $b^{w(G)}$ is infinite. In \cite[Section 4]{DST} they showed that $b^{[w(G),G]}$ is also infinite. In a similar way we now prove that $b^{[w(G), G, \dots, G]}$, where $G$ is repeated $m$ times, is also infinite for every $m \in \N$. This implies that $w$ is not $\frac{1}{m}$-concise for every $m \in \N$.

\begin{proposition}
    There exist a group-word $w$ and a $\BFC(w)$-group $G$ such that 
    $$
    [w(G), \underbrace{G, \dots, G}_{m \tim}]
    $$
    is not $\FC$-embedded in $G$ for every $m \in \N$.
\end{proposition}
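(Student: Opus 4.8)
The plan is to take for $w$ and $G$ precisely the word $w(x,y)=v(x^{2},y^{2})$ and the wreath product $G=A\,\mathrm{wr}\,B$ described above, with $B=\langle b\rangle$ of order $2$, and to prove that $b^{[w(G),\underbrace{G,\dots,G}_{m \tim}]}$ is infinite for every $m\in\N$. This shows at once that $[w(G),\underbrace{G,\dots,G}_{m \tim}]$ is not $\FC$-embedded in $G$, and, since $G_w$ is finite while no such subgroup is, that $w$ is not $0$-concise. The $\BFC(w)$ claim is immediate: $|x^{G_w}|\le|G_w|\le4$ for every $x\in G$.

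First I would pin down a nontrivial $w$-value lying in a single coordinate subgroup of the base group of $G$. Write the base group as a direct product $A_{1}\times A_{2}$ of two copies of $A$, on which $b$ acts by interchanging the two factors. By Ivanov's construction $v$ takes the value $z$, a generator of $\Zen(A)$, on $A$; since $v(x,y)=[[x^{pn},y^{pn}]^{n},y^{pn}]^{n}$ is built from $pn$-th powers and commutators, $v(x,y)$ depends only on the cosets $x\Zen(A)$ and $y\Zen(A)$, and because $A/\Zen(A)$ has odd exponent $p^{2}n$ the squaring map of $A/\Zen(A)$ is a bijection; hence $w(x,y)=v(x^{2},y^{2})$ also takes the value $z$ on $A$. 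Evaluating $w$ on $A_{1}\le G$ thus produces a $w$-value $y_{0}:=z\in A_{1}$. Two facts will be used repeatedly: $y_{0}$ has infinite order (as $A$ is torsion-free), and $[y_{0},y_{0}^{\,b}]=1$, because $y_{0}\in A_{1}$ while $y_{0}^{\,b}\in A_{2}$ lie in distinct direct factors.

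Next, fix $m\ge1$. For every $k\in\Z$ the element $y_{0}^{\,k}$ lies in $w(G)$ and still satisfies $b^{2}=1$ and $[y_{0}^{\,k},(y_{0}^{\,k})^{b}]=1$, so Lemma~\ref{esempio}, applied with $m-1$ in place of $m$, gives
$$
[y_{0}^{\,k},\prescript{}{m}{b}]=(y_{0}^{\,k})^{(-1)^{m-1}2^{m-1}b}\,(y_{0}^{\,k})^{(-1)^{m}2^{m-1}}=u_{k}^{\,b}u_{k}^{-1},\qquad u_{k}:=y_{0}^{(-1)^{m-1}2^{m-1}k}\in A_{1},
$$
and this element lies in $[w(G),\underbrace{G,\dots,G}_{m \tim}]$. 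I would then compute $b^{c_{k}}$ for $c_{k}:=u_{k}^{\,b}u_{k}^{-1}$. Since $c_{k}$ lies in the base group and $c_{k}^{\,b}=u_{k}(u_{k}^{\,b})^{-1}=c_{k}^{-1}$, one gets
$$
b^{c_{k}}=c_{k}^{-1}(b c_{k} b^{-1})b=c_{k}^{-1}c_{k}^{-1}b=c_{k}^{-2}b=u_{k}^{2}\,(u_{k}^{2})^{-b}\,b ,
$$
the last step using that $u_{k}^{2}\in A_{1}$ and $(u_{k}^{2})^{b}\in A_{2}$ commute. Projecting $c_{k}^{-2}=b^{c_{k}}b^{-1}$ onto the factor $A_{1}$ yields $u_{k}^{2}=y_{0}^{(-1)^{m-1}2^{m}k}$, and these are pairwise distinct as $k$ runs over $\Z$ because $y_{0}$ has infinite order. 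Hence $\{\,b^{[y_{0}^{\,k},\prescript{}{m}{b}]}:k\in\Z\,\}$ is an infinite subset of $b^{[w(G),\underbrace{G,\dots,G}_{m \tim}]}$, so $[w(G),\underbrace{G,\dots,G}_{m \tim}]$ is not $\FC$-embedded in $G$. For $m=0$ the subgroup is $w(G)$ itself, which is not $\FC$-embedded since $b^{w(G)}$ is infinite by \cite[Section 4]{BKS}; this covers all $m\in\N$.

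The points I expect to need the most care are that squaring the arguments does not destroy the nontrivial value of $v$ on $A$ (which relies precisely on $2$ being coprime to the exponent $p^{2}n$ of $A/\Zen(A)$) and the coordinate bookkeeping in the wreath product keeping $y_{0}$ inside one factor $A_{1}$, so that Lemma~\ref{esempio} applies and the projection onto $A_{1}$ separates the conjugates of $b$. The identity $b^{c}=c^{-2}b$ for $c$ in the base group is the single nontrivial computation, but it drops out at once from $c^{b}=c^{-1}$.
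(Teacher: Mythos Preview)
Your argument is correct and follows essentially the same strategy as the paper: the same group $G=A\,\mathrm{wr}\,B$, the same word $w(x,y)=v(x^{2},y^{2})$, the same appeal to Lemma~\ref{esempio}, and the same family of conjugating elements $[y_{0}^{k},\prescript{}{m}{b}]$ to show that $b$ has infinitely many conjugates. The only technical difference is that the paper passes to the quotients $G/N$ with $N=\langle v_{0}^{t},(v_{0}^{b})^{t}\rangle$ and shows $|(bN)^{[w(G/N),G/N,\dots,G/N]}|\ge t$ for every odd $t$, whereas you work directly in $G$, first arguing that $v_{0}$ is already a $w$-value in $A_{1}$ (via the bijectivity of squaring on $A/\Zen(A)$) and then using the torsion-freeness of $A$ together with the projection onto $A_{1}$ to separate the conjugates $b^{c_{k}}=c_{k}^{-2}b$; this is slightly more direct and avoids the quotient bookkeeping.
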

\begin{proof}
Let $G$ and $w$ be as in (\ref{gruppo}) and (\ref{parola}), respectively. Then, by \cite[Proposition 4.1]{BKS}, $G$ is a $\BFC(w)$-group. Denote by $K=A\times A^b $ the base group of $G$. For any odd integer $t\geq 1$, let $N=\langle v_0^t, (v_0^b)^t \rangle$, where $v_0 \in A$ is the nontrivial value of $v(x, y)$ in $A$. Notice that $N$ is central in $K$ and closed under conjugation by $b$ so that $N$ is a normal subgroup of $G$. Also, since $K/N$ has odd exponent $p^2tn$ and $|G/K|=2$, we have 
$$
K/N=\{g^2N | g \in G\}
$$
and consequently $(K/N)_v=(G/N)_w$. Hence $v_0N \in (G/N)_w$, and therefore $v_0^kN \in w(G/N)$ for any integer $k$. It follows that 
$$
b^{[v_0^k, \prescript{}{m}{b} ]}N \in (bN)^{[w(G/N), G/N, \dots, G/N]},
$$
where $G/N$ is repeated exactly $m$ times, with $m \in \N$.
Now $b^{[v_0^k,  \prescript{}{m}{b}]}N=b[b,[v_0^k, \prescript{}{m}{b}]]N=b[v_0^k,\prescript{}{m+1}{b}]^{-1}N$. By Lemma \ref{esempio} we have 
        \begin{eqnarray*}
            b[v_0^k,\prescript{}{m+1}{b}]^{-1}N&=&bv_0^{k(-1)^m2^m}v_0^{k(-1)^{m+1}2^mb}N \\
            &=&b(v_0^{(-1)^m2^m}v_0^{(-1)^{m+1}2^mb})^kN,
        \end{eqnarray*}
 where     $v_0^{(-1)^m2^m}v_0^{(-1)^{m+1}2^mb}N$   has order $t$ in $G/N$. Thus 
 $$
 |\{b^{[v_0^k,\prescript{}{m}{b}]}N | k \in \Z \}|=t
 $$
 and so
 $$
 |(bN)^{[w(G/N), G/N, \dots, G/N]}|\geq t.
 $$
 In particular $|b^{[w(G),G, \dots, G]}|\geq t$. Since $t$ is an arbitrary odd positive integer, we conclude that $b^{[w(G),G, \dots, G]}$ is infinite. Therefore 
 $$
    [w(G), \underbrace{G, \dots, G}_{m \tim}]
    $$
    is not $\FC$-embedded in $G$ for every $m \in \N$.
\end{proof}

\begin{corollary}
    There exist a group-word $w$ which is not $\frac{1}{m}$-concise for every $m \in \N$ and neither $0$-concise.
\end{corollary}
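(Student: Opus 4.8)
The plan is to deduce the statement directly from the preceding Proposition. Let $w$ and $G$ be the word and group exhibited there (the wreath product $G=A\,\mathrm{wr}\,B$ together with $w(x,y)=v(x^2,y^2)$), so that $G$ is a $\BFC(w)$-group; in particular $G_w$ is finite, indeed $|G_w|\leq 4$. That Proposition asserts that $[w(G), \underbrace{G, \dots, G}_{m \tim}]$ is not $\FC$-embedded in $G$ for every $m\in\N$.

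The first step I would carry out is to upgrade ``not $\FC$-embedded'' to ``infinite''. This is immediate: if $H$ is a \emph{finite} normal subgroup of a group $L$, then $x^{H}\subseteq xH$ is finite for every $x\in L$, so $H$ is automatically $\FC$-embedded. Since $[w(G), \underbrace{G, \dots, G}_{m \tim}]$ is a normal subgroup of $G$ (an iterated commutator of normal subgroups), its failure to be $\FC$-embedded forces it to be infinite, for every $m\in\N$; and because $w(G)\supseteq[w(G),G]$, the verbal subgroup $w(G)$ itself is infinite as well. The second step is purely to unwind the definitions. For each $m\in\N$ the group $G$ has $G_w$ finite while $[w(G), \underbrace{G, \dots, G}_{m-1 \tim}]$ is infinite, which is exactly a counterexample to $\frac{1}{m}$-conciseness; hence $w$ is not $\frac{1}{m}$-concise for any $m$. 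Likewise, there is no $n\in\N$ for which $[w(G), \underbrace{G, \dots, G}_{n-1 \tim}]$ is finite, since all of these subgroups are infinite, so $w$ is not $0$-concise either.

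I do not expect any genuine obstacle here: the corollary is a formal consequence of the Proposition together with the trivial observation that finite normal subgroups are $\FC$-embedded. The only point that needs a little care is the index bookkeeping — the Proposition produces a non-$\FC$-embedded subgroup with $m$ copies of $G$, whereas $\frac{1}{m}$-conciseness concerns $m-1$ copies — and this, together with whatever convention is adopted for $\N$, is dealt with by also noting that $w(G)$ itself is infinite.
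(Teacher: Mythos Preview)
Your proposal is correct and follows exactly the route the paper intends: the Corollary is stated without proof because it is an immediate consequence of the preceding Proposition together with the trivial fact that a finite normal subgroup is $\FC$-embedded. Your handling of the index shift (covering the case $m=1$ by observing $w(G)\supseteq[w(G),G]$, hence $w(G)$ is infinite) is the only thing one might add explicitly, and you have done so.
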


\section*{Acknowledgements}
The authors have been partially supported by the National Group for Algebraic and Geometric Structures,
and their Applications (GNSAGA – INdAM).


\begin{thebibliography}{10}

\bibitem{BKS} S. Brazil, A. Krasilinikov, P. Shumyatsky, {\it Groups with bounded verbal conjugacy classes}, J. Group Theory {\bf 9} (2006), no. 1, 127-137. 

\bibitem{DST17} C. Delizia, P. Shumyatsky and A. Tortora, {\it On groups with finite conjugacy classes in a verbal subgroup}, Bull. Aust. Math. Soc. {\bf 96} (2017), no. 3, 429-437.

\bibitem{DST} C. Delizia, P. Shumyatsky and A. Tortora, {\it On semiconcise words}, J. Group Theory {\bf 23} (2020), 629-639. 

\bibitem{DSTT} C. Delizia, P. Shumyatsky, A. Tortora and M. Tota {\it On conciseness of some commutator words}, Arch. Math. (Basel) {\bf 112} (2019), 27-32. 

 \bibitem{E} P. C. Eklof, {\it Ultraproducts for algebraists}, in: Handbook of Mathematical Logic, Stud. Logic Found. Math. 90, North-Holland, Amsterdam (1977), 105-137.

\bibitem{FMT} G. A. Fernández-Alcober, M. Morigi and G. Traustason, {\it A note on conciseness of Engel words}, Comm. Algebra {\bf 40} (2012), no. 7, 2570-2576.

\bibitem{I} S. V. Ivanov, {\it P. Hall's conjecture on the finiteness of verbal subgroups}, Izv. Vyssh. Uchebn. Zaved. Mat. (1989), no. 6, 60-70.

\bibitem{R} D. J. S. Robinson, {\it Finiteness Conditions and Generalized Soluble Groups. Part 1}, Springer, New York, 1972.




 
\end{thebibliography}
\end{document}